    \newcommand{\refeq}[1]{(\ref{#1})}
    \def\Xint#1{\mathchoice
    {\XXint\displaystyle\textstyle{#1}}%
    {\XXint\textstyle\scriptstyle{#1}}%
    {\XXint\scriptstyle\scriptscriptstyle{#1}}%
    {\XXint\scriptscriptstyle\scriptscriptstyle{#1}}%
    \!\int}
    \def\XXint#1#2#3{{\setbox0=\hbox{$#1{#2#3}{\int}$ }
    \vcenter{\hbox{$#2#3$ }}\kern-.6\wd0}}
    \def\dashint{\Xint-}
    \newcommand{\closure}[2][3]{{}\mkern#1mu\overline{\mkern-#1mu#2}}
\theoremstyle{definition}
\newtheorem{lemma}{Lemma}[section]
\newtheorem{proposition}[lemma]{Proposition}
\newtheorem{theorem}[lemma]{Theorem}
\newtheorem{corollary}[lemma]{Corollary}
\newtheorem*{remark2}{Remark}
\newtheorem{definition}[lemma]{Definition}
\newcommand{\prop}[1]{\begin{proposition}\label{#1}
\sl }
\newcommand{\eprop}{\end{proposition}}
\newcommand{\thm}[1]{\begin{theorem}\label{#1}
\sl }
\newcommand{\ethm}{\end{theorem}}
\newcommand{\lem}[1]{\begin{lemma}\label{#1}
\sl }
\newcommand{\elem}{\end{lemma}}
\newcommand{\defin}[1]{\begin{definition}\label{#1}
\sl }
\newcommand{\edefin}{\end{definition}}
\newcommand{\beqno}{\begin{eqnarray*}}
\newcommand{\eeqno}{\end{eqnarray*}}
\newcommand{\beqla}[1] {\begin {eqnarray}\label{#1}}
\def\eeq {\end {eqnarray}}
\newcommand{\beq}{\begin {eqnarray}}
\newcommand{\real}{{\mathbb R}}
\newcommand{\integer}{{\mathbb Z}}
\newcommand{\diam}{{\rm diam}\,}
\newcommand{\supp}{{\rm supp}\,}
\newcommand{\Lip}{{\rm Lip}\,}
\title[Besov compositions]{Bounded compositions on scaling invariant Besov spaces}
\author[Koch]{Herbert Koch}
\address{Mathematisches Institut, Universit\"at Bonn,
Endenicher Allee 60
D - 53115 Bonn, Germany}
\email{koch@math.uni-bonn.de}
\author[Koskela]{Pekka Koskela}
\address{Department of Mathematics and Statistics, University of
Jyv\"askyl\"a, PO~Box~35, FI-40014 Jyv\"askyl\"a, Finland}
\email{pkoskela@maths.jyu.fi}
\author[Saksman]{Eero Saksman}
\address{Department of Mathematics and Statistics, University of
Helsinki, PO~Box~68, FI-00014 Helsinki, Finland}
\email{eero.saksman@helsinki.fi}
\author[Soto]{Tom\'as Soto}
\address{Department of Mathematics and Statistics, University of
Helsinki, PO~Box~68, FI-00014 Helsinki, Finland}
\email{tomas.soto@helsinki.fi}
\thanks{The second author was supported by the Academy of Finland, project 131477}
\thanks{The third and fourth authors were supported by the Finnish CoE in Analysis and Dynamics Research}
\keywords{Besov space, composition operator, quasiconformal mapping, metric measure space, Triebel-Lizorkin space}
\subjclass[2000]{Primary: 46E35, 30C65, 47B33; Secondary: 42B35, 30L10}
\begin{document}

\maketitle

\begin{abstract}
For $0 < s < 1 < q < \infty$, we characterize the homeomorphisms $\varphi : \real^n \to \real^n$ for which the composition operator $f \mapsto f \circ \varphi$ is bounded on the homogeneous, scaling invariant Besov space $\dot{B}^s_{n/s,q}(\real^n)$, where the emphasis is on the case $q\not=n/s$, left open in the previous literature. We also establish an analogous result for Besov-type function spaces on a wide class of metric measure spaces as well, and make some new remarks considering the scaling invariant Triebel-Lizorkin spaces $\dot{F}^s_{n/s,q}(\real^n)$ with $0 < s < 1$ and $0 < q \leq \infty$.
\end{abstract}

\section{Introduction}

Let us first recall the definition in terms of first-order differences of the (homogeneous) Besov space $\dot{B}_{p,q}^s(\real^n)$, $0<s<1$, $1 \leq p,\; q \leq \infty$: it is the collection of measurable functions $f$ (modulo functions which are constant almost everywhere) for which the norm
\beqla{besov-norm}
  \|f\|_{\dot{B}_{p,q}^s(\real^n)} := \left(\int_{\real^n}|h|^{-qs} \left( \int_{\real^n} |f(x+h)-f(x)|^p dx \right)^{q/p} \frac{dh}{|h|^n}  \right)^{1/q}
\eeq
(with the usual modifications for $p = \infty$ and $q = \infty$) is finite. The norm defines a Banach space which coincides with the usual Fourier analytically defined space; see \cite[Section 5.2.3]{T}. For $0 < \min(p,q) < 1$, $\dot{B}_{p,q}^s(\real^n)$ will stand for the analogous Fourier analytically defined quasi-Banach space of distributions.

The main question studied in  the present note asks for which homeomorphisms $\varphi$ of $\real^n$ is the mapping\footnote{In our proofs, we shall only deal with continuous functions, so one may think of the mapping to be defined by $f \mapsto f\circ \varphi$ for continuous $f$, and for the rest of the space by completion. Alternatively, the mapping $\varphi$ may be assumed to have Lusin's condition (N) so that the mapping $f \mapsto f\circ \varphi$ is well defined in the sense of equivalence classes with respect to equality almost everywhere.}
\[
  f \mapsto f\circ \varphi
\]
bounded on $\dot{B}_{p,q}^s(\real^n)$? Especially, we are interested in the case where
the norm \refeq{besov-norm} is invariant under scaling changes of variables, i.e. in the case $p = n/s$. Namely, this kind of ``conformal'' invariance makes one expect the class of admissible $\varphi$ to be rich and, in particular, that it might contain all quasiconformal maps, as this is almost obviously true for the conformally invariant Sobolev space $\dot{W}^{1,n}(\real^n).$

For $\dot{B}_{n/s,n/s}^s(\real^n)$ with $0 < s \leq n/(n+1)$, this question is answered in \cite{Vo} and \cite{BS}: it turns out that the quasiconformality (see \cite{Va} and \cite{K} for the definition and general properties of such mappings) of $\varphi$ is a necessary and sufficient condition. Also the Besov spaces with $p \neq n/s$ are dealt with in both papers. One should note that the invariance under quasiconformal maps for said diagonal spaces can be obtained by interpolation.\footnote{For example, one can first interpolate (see e.g.~\cite[Corollary 8.3]{FJ}) between Reimann's result on the space $BMO(\real^n)$ \cite{R} and the ``easy'' case $\dot{W}^{1,n}(\real^n)$ to obtain boundedness on the Triebel-Lizorkin space $\dot{F}^s_{n/s,2}(\real^n)$ for all $s \in (0,1)$, and then interpolate (see e.g.~\cite[Theorem 2.4.2/1]{T2}) between two such spaces to obtain boundedness on $\dot{F}_{n/s,n/s}^s(\real^n)=\dot{B}_{n/s,n/s}^s(\real^n)$ for all $0 < s < 1$.}
The quasiconformal invariance phenomenon carries over to a large class of Triebel-Lizorkin spaces: it was recently shown in \cite{KYZ2} that for $n \geq 2$, $0 < s < 1$ and $q > n/(n+s)$, the space $\dot{F}_{n/s,q}^s(\real^n)$ is invariant under quasiconformal mappings, yielding another proof for the quasiconformal invariance of $\dot{B}_{n/s,n/s}^s(\real^n)=\dot{F}_{n/s,n/s}^s(\real^n)$ for all $0 < s < 1$.

However, the case of Besov spaces with $p = n/s \neq q$ appears to be completely open and it is the aim of this paper to fill in this gap. In fact, in section \ref{section:real} we shall prove the following somewhat surprising result.

\thm{th:bilip-necessity}
Suppose that $\varphi : \real^n \to \real^n$, $n \geq 2$, is a homeomorphism such that the mapping induced by $\varphi$ is bounded on $\dot{B}_{n/s,q}^s(\real^n)$ for some $0 < s < 1$ and $1 < q < \infty$, $q \neq n/s$. Then $\varphi$ is bi-Lipschitz.
\ethm

The converse is obvious: all Besov spaces in the preceding statement are invariant under bi-Lipschitz mappings. By combining Theorem \ref{th:bilip-necessity} with the known results in the case $p=q=n/s$ we thus have the following conclusion.

\begin{corollary}\label{boundedness-conditions}\sl
Suppose that $\varphi : \real^n \to \real^n$, $n \geq 2$, is a homeomorphism. Then

(i) if $0 < s < 1$, the mapping induced by $\varphi$ is bounded on $\dot{B}_{n/s,n/s}^s (\real^n)$ if and only if $\varphi$ is quasiconformal;

(ii) if $0 < s < 1 < q < \infty$ and $q \neq n/s$, the mapping induced by $\varphi$ is bounded on $\dot{B}_{n/s,q}^s (\real^n)$ if and only if $\varphi$ is bi-Lipschitz.
\end{corollary}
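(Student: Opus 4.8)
The plan is to assemble the corollary from Theorem \ref{th:bilip-necessity} together with the facts recalled in the introduction, tracking which statement supplies which implication; no new computation is needed.

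For part (ii): the implication ``bounded $\Rightarrow$ bi-Lipschitz'' is exactly Theorem \ref{th:bilip-necessity}. For the converse I would note that an $L$-bi-Lipschitz $\varphi$ satisfies Lusin's condition (N) and has Jacobian comparable to $1$ almost everywhere, so the substitution $x\mapsto\varphi^{-1}(x)$ is admissible in \refeq{besov-norm}; since also $|\varphi(x+h)-\varphi(x)|\asymp|h|$ uniformly in $x,h$, this substitution converts both the inner integral in $x$ and the outer integral in $h$ into comparable quantities and yields $\|f\circ\varphi\|_{\dot{B}_{n/s,q}^s}\le C\,\|f\|_{\dot{B}_{n/s,q}^s}$ with $C=C(L,n,s,q)$. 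Since $q>1$ here the difference norm \refeq{besov-norm} suffices and no passage to the Fourier-analytic definition is required. This settles (ii).

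For part (i): the implication ``quasiconformal $\Rightarrow$ bounded'' on $\dot{B}_{n/s,n/s}^s(\real^n)$ for every $0<s<1$ is the case $q=n/s$ of the result of \cite{KYZ2} (its hypothesis $q>n/(n+s)$ holds since $n/s>n/(n+s)$), using $\dot{B}_{n/s,n/s}^s=\dot{F}_{n/s,n/s}^s$; alternatively it follows from the interpolation chain sketched in the footnote after that result, starting from Reimann's theorem on $BMO(\real^n)$ and the elementary case $\dot{W}^{1,n}(\real^n)$. The converse implication ``bounded $\Rightarrow$ quasiconformal'' is the content of \cite{Vo, BS} for $0<s\le n/(n+1)$, and for the remaining range $n/(n+1)<s<1$ I would run the same argument, whose engine is a capacity/modulus lower bound: one tests $f\mapsto f\circ\varphi$ against bump functions adapted to spherical ring domains around an arbitrary point and normalized in $\dot{B}_{n/s,n/s}^s$, and the two-sided distortion estimate for such rings that results is precisely the metric definition of quasiconformality. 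Because $\dot{B}_{n/s,n/s}^s$ is scaling invariant for all $s\in(0,1)$, the bump functions and their norms scale uniformly across the range, so nothing beyond \cite{Vo, BS} is needed. Together with (ii) this proves the corollary.

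The only step that is more than bookkeeping is checking that the necessity argument for part (i) is not secretly tied to $s\le n/(n+1)$; I expect this to be routine, that restriction entering the \emph{sufficiency} half of \cite{Vo, BS} (where $p=n/s$ must be large) rather than the necessity half, which is local and purely metric.
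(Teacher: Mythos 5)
Your overall structure is right and matches the paper's. Part (ii) is exactly the paper's reading: necessity from Theorem \ref{th:bilip-necessity}, sufficiency because bi-Lipschitz maps preserve the difference norm \refeq{besov-norm} via the change of variables you describe (the paper simply calls this ``obvious''). Part (i)'s sufficiency for all $0<s<1$ is also correctly attributed to \cite{KYZ2} (or to the interpolation chain in the footnote).

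The one place your argument is incomplete rather than merely compressed is the necessity half of (i) for $n/(n+1)<s<1$. You say you ``would run the same argument'' as \cite{Vo, BS} and ``expect this to be routine''; the paper does not leave this to the reader but proves exactly that extension as Proposition \ref{pr:qc-necessity}, which asserts that boundedness on $\dot{B}^s_{n/s,q}(\real^n)$ for \emph{any} $0<s<1$ and $1<q<\infty$ forces quasiconformality. Since $q=n/s>n>1$ when $n\ge2$ and $s<1$, this covers the diagonal case throughout the range. Proposition \ref{pr:qc-necessity} is proved via the two capacity lemmas (Lemmas \ref{le:capacity-lower} and \ref{le:capacity-upper}) and a ring-domain distortion argument --- precisely the ``capacity/modulus lower bound against bump functions adapted to spherical ring domains'' you sketched, so your intuition is on target. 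But as written your proof relies on a conjecture about how \cite{Vo, BS} could be extended; replace that with an appeal to Proposition \ref{pr:qc-necessity}, which is already established in the paper and is in fact the first step in the proof of Theorem \ref{th:bilip-necessity} you are already citing. With that citation supplied, the proof is complete and matches the paper's intent.

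Your final speculation --- that the restriction $s\le n/(n+1)$ in \cite{Vo, BS} is a sufficiency-side artifact --- is consistent with how the paper treats the matter: the sufficiency for all $s$ is recovered via \cite{KYZ2}/interpolation, and the necessity is re-proved in full generality here rather than being inherited from the earlier literature.
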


We also make some remarks concerning the case $q \leq 1$. For an analytic approach in the spirit of Reimann \cite{R}, see the discussion preceding Lemma \ref{le:small-q}. For a localized approach in the spirit of Astala \cite{A}, see the discussion preceding Theorem \ref{th:small-q-3}. 

In section \ref{section:tl} we establish a converse to \cite[Theorem 1.1]{KYZ2}, i.e.~the necessity of the quasiconformality of $\varphi$ for the mapping induced by $\varphi$ to be bounded on a scaling invariant Triebel-Lizorkin space.

A similar question in the setting of metric spaces is studied in \cite{B} and \cite{BP}, where it is shown (among other things) for compact metric spaces $Z_1$ and $Z_2$ which are respectively Ahlfors regular, and satisfy some other reasonable assumptions, that a homeomorphism $\varphi : Z_1 \to Z_2$ induces an isomorphism between the respective diagonal Besov spaces if and only if $\varphi$ is quasisymmetric.

In the proofs of our main results we shall work with characterizations of Besov-type function spaces on doubling metric measure spaces introduced in \cite{GKZ}, and for this reason our methods also yield extensions in a sizeable class of metric spaces considered in \cite{HK} (see the discussion preceding Proposition \ref{pr:qc-necessity-m} for the precise assumptions). Section \ref{section:metric} is devoted to these extensions.

We finally list some notation conventions. On any metric space $(X,d)$, we write $B_d(x,r)$ (or just $B(x,r)$ if the metric is obvious from the context) for the ball with center $x \in X$ and radius $r > 0$: $\{y \in X : d(x,y) < r\}$, and $\closure[2]{B}_d(x,r)$ or $\closure[2]{B}(x,r)$ for the corresponding closed ball. Similarly, for $x \in X$ and $0 < r < R$, $A_d(x,r,R)$ or just $A(x,r,R)$ stands for the annulus $\{y \in X : r < d(y,x) < R\}$. For a set $A$ of finite measure with respect to a measure $\mu$, we use the notation $\dashint_A f d\mu$ for the average of $f$ over $A$, i.e.~$\mu(A)^{-1}\int_A f d\mu$ whenever the latter quantity is well defined. In the context of $\real^n$, $|A|$ stands for the $n$-Lebesgue measure of a measurable set $A$. For two non-negative functions $f$ and $g$ with the same domain, the notation $f \lesssim g$ stands for $f \leq Cg$ for some positive constant $C$, usually independent of some parameters. The notation $f \approx g$ means that $f \lesssim g$ and $g \lesssim f$.

\section{Besov spaces on $\real^n$}\label{section:real}

In this section, we will make use of the following characterizations of the Besov (quasi-)norm, see \cite{GKZ} and \cite{KYZ2}:
\[
  \| f\|_{\dot{B}^{s}_{p,q}(\real^n)} \approx \left( \int_{0}^{\infty} [t^{-s} C_p(f)(t)]^q \frac{dt}{t} \right)^{1/q}
\]
for $0 < s < 1$, $n/(n+s) < p \leq \infty$ and $0 < q \leq \infty$, where
\[
  C_p (f)(t) = \left( \int_{\real^n} \dashint_{B(x,t)} |f(x) - f(y)|^p dy dx \right)^{1/p}
\]
(usual modifications for $p = \infty$ and $q = \infty$). In our applications we always have $p = n/s > n/(n+s)$ so that the characterization above applies. Another characterization of the norm for the same parameter range is the following (we again refer to \cite{GKZ} and \cite{KYZ2}):
\beq \label{eq:besov-characterizations}
  \| f\|_{\dot{B}^{s}_{p,q}(\real^n)} \approx \inf_{\overrightarrow{g}}\left( \sum_{k \in \integer} \|g_k \|_{L^p(\real^n)}^q \right)^{1/q},
\eeq
where the infimum is taken over the sequences $\overrightarrow{g} := (g_k)_{k\in\integer}$ (the so-called \emph{Haj\l asz $s$-gradients} of $f$) of measurable functions $g_k : \real^n \to [0,\infty]$ satisfying
\beq \label{eq:besov-characterizations-2}
  |f(x) - f(y)| \leq |x - y|^s \left(g_k(x) + g_k(y) \right)
\eeq
for all $k \in \integer$ and all $x$, $y$ in $\real^n$ with $2^{-k-1} \leq |x - y| < 2^{-k}$. Note that the quantity on the right-hand side of \refeq{eq:besov-characterizations} is monotonic with respect to $q$.

The dimension $n$ will always be at least $2$.

We start out with several auxiliary results which will be needed in the proofs of Proposition \ref{pr:qc-necessity} and Theorem \ref{th:bilip-necessity}. The first two lemmas estimate the Besov norms of certain types of functions which will appear in our constructions. Essentially, for suitably chosen bump functions $h_j$ supported on disjoint balls $B_j$, and all sequences of positive reals $(b_j)_{j \geq 0}$, the $\dot{B}_{n/s,q}^s(\real^n)$ norm of $\sum_j b_j h_j$ turns out to be comparable to $\|(b_j)_j\|_{\ell^q}$ if the radii of $B_j$ form a dyadic scale (Lemma \ref{le:balls-different-sizes}), and to $\|(b_j)_j\|_{\ell^{n/s}}$ if the radii are equal (Lemma \ref{le:balls-same-sizes}).

\lem{le:balls-different-sizes}

Let $0 < s < 1$, $1 \leq q \leq \infty$ and $0 < R < \infty$. Suppose that $(B_j)_{j = 0}^\infty$ is a sequence of balls in $\real^n$ such that $B_j$ has radius $2^{-j} R$ for all $j \geq 0$, and that $(f_j)_{j=0}^\infty$ is a sequence of measurable functions on $\real^n$ with $\supp f_j \subset \closure[2]{B_j}$ for all $j \geq 0$. Write $F := \sum_{j\geq 0} f_j$.

(i) If the functions $f_j$ are Lipschitz with
\[
  \Lip f_j \leq 2^j R^{-1} b_j, \quad b_j > 0,
\]
for all $j \geq 0$, we have
\[
  \|F\|_{\dot{B}_{n/s,q}^{s}(\real^n)} \lesssim \| (b_j)_{j \geq 0} \|_{\ell^q}.
\]

(ii) If the balls $9 B_j$ are pairwise disjoint, we have
\[
  \|F\|_{\dot{B}_{n/s,q}^{s}(\real^n)} \gtrsim \left\| (2^{js} R^{-s} \|f_j\|_{L^{n/s}(\real^n)})_{j\geq 0} \right\|_{\ell^q}.
\]
The implied constants in both conclusions are independent of $R$.
\elem

\begin{proof}
Let $p := n/s$ and suppose for the moment that $1 \leq q < \infty$. We first consider the situation in (i). The Lipschitz continuity of each $f_j$ gives $\|f_j\|_{L^{\infty}} \lesssim b_j$, so their $L^p$ norms can be estimated by
\[
  \|f_j\|_{L^p} \lesssim b_j |B_j|^{1/p} \approx (2^{-j} R)^s b_j.
\]

We shall now estimate $C_p(F)(t)$ for different ranges of $t$. For $t \geq R$, there is the simple estimate
\[ 
  C_p(F)(t)
  \leq \left( \int_{\real^n} \left[ \left( \dashint_{B(x,t)} |F(x)|^p dy \right)^{1/p}
  + \left( \dashint_{B(x,t)} |F(y)|^p dy \right)^{1/p} \right]^{p} dx \right)^{1/p}
  \lesssim \|F\|_{L^p},
\]
and by H\"older's inequality,
\[
  \|F\|_{L^p} \leq \sum_{j \geq 0} \|f_j\|_{L^p} \lesssim R^{s} \sum_{j \geq 0} 2^{-js} b_j
  \lesssim R^s \|(b_j)_{j\geq 0}\|_{\ell^q}.
\]

Suppose now that $2^{-k} R \leq t <  2^{1-k} R$ for a positive integer $k$. For any $j \geq k$, we have $C_p (f_j)(t) \lesssim (2^{-j} R)^s b_j$ as above. For any $j < k$, we may use the Lipschitz continuity of $f_j$ to obtain 
\begin{align*}
  C_p(f_j)(t)
& \leq 2^{j} R^{-1} b_j \left( \int_{2 B_j} \dashint_{B(x,t)} |x - y|^p dy dx \right)^{1/p}
  \approx 2^{j} R^{-1} b_j t |2B_j|^{1/p}
  \approx 2^{(1-s)j} 2^{-k} R^s b_j.
\end{align*} 
Putting these estimates together, we have
\[
  C_p(F)(t) \leq \sum_{j \geq 0} C_p (f_j)(t)
  \lesssim R^s 2^{-k} \sum_{j=0}^{k-1} 2^{(1-s)j}b_j + R^s \sum_{j \geq k} 2^{-js} b_j .
\]

Finally,
\[
  \|F\|_{\dot{B}^{s}_{p,q}(\real^n)}^q \approx \int_{0}^{R} t^{-1-qs} [C_p(F)(t)]^q dt
  + \int_{R}^{\infty} t^{-1-qs} [C_p(F)(t)]^q dt =: I + J.
\]
Here $J \lesssim R^{qs} \|(b_j)_{j\geq 0}\|_{\ell^q}^q \int_{R}^{\infty} t^{-1-qs} dt
  \approx \|(b_j)_{j\geq 0}\|_{\ell^q}^q$, and for $I$ we use the previous estimates on each interval $[2^{-k}R , 2^{1-k} R)$, $k \geq 1$, to get
\[
  I \lesssim \sum_{k = 1}^{\infty} 2^{sqk}
  \left( 2^{-k} \sum_{j=0}^{k-1} 2^{(1-s)j}b_j + \sum_{j \geq k} 2^{-js} b_j \right)^q.
\]
Denoting by $r_k^q$ the $k$th term in the outer sum and writing $\tilde{s} := \min(s,1-s)$, we get
\[
  r_k = \sum_{j=0}^{k-1} 2^{(1-s)(j-k)}b_j + \sum_{j \geq k }2^{(k-j)s} b_j 
  \leq \sum_{j \geq 0} 2^{-|k-j|\tilde{s}} b_j =: r'_k.
\]
Now the convolution operator $(b_j)_{j \geq 0} \mapsto (r'_k)_{k \geq 1}$ is bounded on the space $\ell^q$, so we actually get
\[
  I \lesssim \sum_{k \geq 1} (r'_k)^q \lesssim \sum_{j \geq 0} b_j^q
= \|(b_j)_{j \geq 0}\|_{\ell^q}^q,
\]
which finishes the proof of part (i).

Suppose now that we are in the situation of part (ii). The disjointness of the balls $9 B_j$ shows that $ |F(x) - F(y)| = |f_j(x) - f_j(y)|$ for any $x$ and $y$ in $9 B_j$. Also, for any $x \in \real^n$ and $\lambda > 0$, the measure of the annulus $A(x,\lambda/2, \lambda)$ is comparable to the measure of the ball $B(x,\lambda)$. In particular, if $t \in (2^{2-j} R, 2^{3-j} R]$ for some integer $j \geq 0$, one has
\[
  C_p(F)(t) \gtrsim \left( \int_{B_j} \dashint_{A(x,t/2,t)} |f_j(x)-f_j(y)|^p dydx \right)^{1/p}
= \|f_j\|_{L^p(\real^n)}.
\]
Using this estimate on each interval $(2^{2-j} R , 2^{3-j} R]$ yields
\[
  \int_{0}^{\infty} t^{-1-qs} C_p (F)(t)^q dt \gtrsim
  \sum_{j \geq 0} \|f_j\|_{L^p(\real^n)}^q \int_{2^{2-j} R}^{2^{3-j} R} t^{-qs} \frac{dt}{t}
  \approx \sum_{j \geq 0} \left( 2^{js} R^{-s} \|f_j\|_{L^p(\real^n)} \right)^q.
\]

Finally, both results for $q = \infty$ are obtained by an easy modification of the proof above and by noting that
\[
  \|F\|_{\dot{B}^s_{p,\infty}(\real^n)} \approx \sup_{k \in \integer} \left(2^{-k}R\right)^{-s} C_p(F)(2^{-k}R)
\]
with the implied constants independent of $R$.
\end{proof}

\lem{le:balls-same-sizes}

Let $0 < s < 1$, $1 \leq q \leq \infty$ and $0 < R < \infty$. Suppose that $(B_j)_{j=0}^\infty$ is a collection of balls in $\real^n$ such that each ball has radius $R$ and the balls $9 B_j$ are pairwise disjoint. Suppose also that $(f_j)_{j=0}^\infty$ is a sequence of functions on $\real^n$ with $\supp f_j \subset \closure[2]{B_j}$ for all $j \geq 0$. Write $F := \sum_{j \geq 0} f_j$.

(i) If the functions $f_j$ are Lipschitz with
\[
  \Lip f_j \leq R^{-1} b_j, \quad b_j > 0,
\]
for all $j \geq 0$, we have
\[
  \|F\|_{\dot{B}_{n/s,q}^s(\real^n)} \lesssim \| (b_j)_{j \geq 0} \|_{\ell^{n/s}}.
\]

(ii) We have (with the functions $f_j$ not necessarily Lipschitz)
\[
  \|F\|_{\dot{B}_{n/s,q}^s(\real^n)} \gtrsim \left\| (R^{-s}\|f_j\|_{L^{n/s}(\real^n)})_{j\geq 0}  \right\|_{\ell^{n/s}}.
\]
The implied constants in both conclusions are independent of $R$.
\elem

\begin{proof}
The proof goes along the same lines as in the previous lemma. Again, we denote $n/s$ by $p$ and only consider the case $1 \leq q < \infty$. In the situation of (i), we have $\|f_j\|_{L^p(\real^n)} \lesssim R^s b_j$ for all $j \geq 0$, and for $t \geq R$ we therefore obtain
\[
  C_p(F)(t) \lesssim \|F\|_{L^p(\real^n)}
  = \left( \sum_{j \geq 0} \|f_j\|_{L^p}^p \right)^{1/p}
  \lesssim R^s \| (b_j)_{j \geq 0} \|_{\ell^p}.
\]
For $t < R$, we use the Lipschitz continuity of each $f_j$ to obtain
\[
  C_p(f_j)(t) \leq R^{-1} b_j \left( \int_{2B_j} \dashint_{B(x,t)} |x-y|^p dy dx\right)^{1/p}
  \lesssim R^{-1} b_j t |2 B_j|^{1/p} \lesssim R^{s-1} t b_j.
\]
The disjointness of the balls $9 B_j$ thus yields
\[
  C_p(F)(t)^p = \sum_{j \geq 0} \int_{\real^n} \dashint_{B(x,t)} |f_j(x) - f_j(y)|^p dy dx
  = \sum_{j \geq 0} C_p (f_j)(t)^p \lesssim (R^{s-1} t)^p \sum_{j \geq 0} b_j^p,
\]
so that
\[
  \|F\|_{\dot{B}_{p,q}^{s}(\real^n)}^q \lesssim \| (b_j)_{j \geq 0} \|_{\ell^p}^q
  \left( (R^{s-1})^q \int_{0}^{R}t^{-1-qs+q} dt
    + R^{qs} \int_{R}^{\infty} t^{-1-qs}dt  \right)
  \approx \| (b_j)_{j \geq 0} \|_{\ell^p}^q.
\] 

In the situation of (ii), we will simply estimate $C_p (F)(t)$ from below for $t \in (4R, 8R]$. For these $t$, the disjointness of the balls $9 B_j$ yields
\[
  C_p(F)(t)^p \geq \sum_{j \geq 0} \int_{B_j} \dashint_{B(x,t)} |f_j(x) - f_j(y)|^p dy dx
  \gtrsim \sum_{j \geq 0} \| f_j \|_{L^p(\real^n)}^{p}
\]
as in the proof of part (ii) of the previous lemma. Therefore,
\[
  \|F\|_{\dot{B}_{p,q}^{s}(\real^n)}^q \gtrsim \left\| (\|f_j\|_{L^p(\real^n)} )_{j \geq 0} \right\|_{\ell^p}^q
  \int_{4R}^{8R} t^{-1-qs} dt \approx 
  \left\| (R^{-s}\|f_j\|_{L^{p}(\real^n)})_{j\geq 0}  \right\|_{\ell^{p}}^q. \qedhere
\]
\end{proof}

Here is an estimate which we will use to get a lower bound for the Besov capacity of a quasi-annulus later on. A similar result with the $L^p$ norm of an upper gradient instead of the Besov norm is contained in \cite[Theorem 5.9]{HK}; the proof below is a modification of the proof therein.

\lem{le:capacity-lower}
Suppose that $0 < s < 1$, $1 \leq q < \infty$ and that $B_R$ is a ball of radius $R$ in $\real^n$. If $0 < \lambda \leq 1$, $E$ and $F$ are two such compact and connected subsets of $B_R$ that
\[
  \min(\diam E, \diam F) \geq \lambda R
\]
and $u$ is a continuous function on $\real^n$ with $u_{|E} \leq 0$ and $u_{|F} \geq 1$, we have
\[
  \| u \|_{\dot{B}_{n/s,q}^s(\real^n)} \geq c \lambda^{1/q},
\]
where $c > 0$ depends only on $q$, $s$ and $n$.
\elem

\begin{proof}
We will work with the characterization of the Besov norms through Haj\l asz-type gradients; see \refeq{eq:besov-characterizations}. By the monotonicity of the $\ell^q$ norms with respect to to $q$ (see the discussion in the beginning of this chapter), it suffices to consider the case $q \geq n/s =: p$. Recalling that $|A(x,\lambda/2,\lambda)| \approx \lambda^n$ for all $x \in \real^n$ and $\lambda > 0$, we have $|A_x^j| \approx 2^{-2j}R$ where
$
  A_x^j := A\left( x , 2^{-2j-1}R , 2^{-2j}R \right)
$
for $x \in \real^n$ and $j \geq 0$. Let $(g_k)_{k\in\integer}$ be a Haj\l asz $s$-gradient of $u$ so that
\[
  \left(\sum_k\|g_k\|_{L^p(\real^n)}^q\right)^{1/q} \approx \|u\|_{\dot{B}_{p,q}^s(\real^n)}.
\]
In the rest of this proof, the notation $|x-y| \sim 2^{-k}$, $k\in \integer$, will be used with the meaning $2^{-k-1} \leq |x-y| < 2^{-k}$.

Assume first that there exist points $x \in E$ and $y \in F$ with $| u(x) - u_{A_x^0} | \leq 1/5$ and $| u(y) - u_{A_y^0} | \leq 1/5$. Thus,
\[
  1 \leq |u(x)-u(y)| \leq 2/5 + |u_{A_x^0} - u_{A_y^0} |,
\]
so that $|u_{A_x^0} - u_{A_y^0}| \geq 3/5$. On the other hand,
\begin{align}
|u_{A_x^0} - u_{A_y^0} | & \lesssim R^{-2n} \iint_{A_x^0 \times A_y^0} |u(x') - u(y')| dx'dy' \notag \\
  & \lesssim R^{-2n} \sum_{2^{-k} \leq 4R} \iint_{\substack{ A_x^0 \times A_y^0 \\ |x' - y'| \sim 2^{-k}}}|u(x')-u(y')|dx'dy' \notag \\
  & \lesssim R^{-2n} \sum_{2^{-k} \leq 4R} 2^{-ks} \iint_{\substack{ A_x^0 \times A_y^0 \\ |x' - y'| \sim 2^{-k}}}\left(g_k(x')+g_k(y')\right)dx'dy' \notag \\
  & \lesssim R^{-2n} \sum_{2^{-k} \leq 4R} 2^{-k(n+s)} \int_{2B_R} g_k(z) dz \notag \\
  & \lesssim R^{n/p' - 2n} \sum_{2^{-k} \leq 4R} 2^{-k(n+s)} \| g_k \|_{L^{p}(\real^n)} \label{eq:linf-embedding} \\
  & \lesssim R^{-n-s} R^{n+s} \left( \sum_k\|g_k\|_{L^p(\real^n)}^q\right)^{1/q} \approx \|u\|_{\dot{B}_{p,q}^s(\real^n)}, \notag
\end{align}
whence $\|u\|_{\dot{B}_{p,q}^s(\real^n)} \gtrsim 1$.

To deal with the other case, we may assume by symmetry that $|u(x) - u_{A_x^0}| > 1/5$ for all $x \in E$. By the continuity of $u$ we thus have
\[
  1 \lesssim \sum_{j \geq 0} | u_{A_x^{j}} - u_{A_x^{j+1}} |.
\]
The $j$th term of this series can be estimated as above and by taking into account the geometry of the situation:
\begin{align}
  | u_{A_x^{j}} - u_{A_x^{j+1}} | & \lesssim \left(2^{-2j}R\right)^{-2n} \iint_{A_x^{j}\times A_x^{j+1}} |u(x')-u(y')|dx' dy' \notag \\
& \lesssim \left(2^{-2j}R\right)^{-2n} \sum_{2^{-2j-2}R \leq 2^{-k} \leq 2^{-2j+1}R } 2^{-k(n+s)} \left( \int_{A_x^j} + \int_{A_x^{j+1}} \right) g_k(z) dz \notag \\
& \lesssim \sum_{2^{-2j-2}R \leq 2^{-k} \leq 2^{-2j+1}R } 2^{k(2n - (n+s))} \int_{B(x,2^{-k+2})} g_k(z) dz \notag \\
& \lesssim \sum_{2^{-2j-2}R \leq 2^{-k} \leq 2^{-2j+1}R } \left( \int_{B(x,2^{-k+2})} g_k(z)^p dz \right)^{1/p}. \label{eq:linf-embedding-2}
\end{align}
There are at most four terms in the above sum, so summing over $j$ yields
\[
  1 \lesssim \sum_{2^{-k} \leq 2R} \left( \int_{B(x,2^{-k+2})} g_k(z)^p dz \right)^{1/p},
\]
which in particular means that there exists an $\epsilon > 0$ depending only on the data such that for every $x \in E$ we can find an integer $k_x$ with
\[
  \left( \int_{B(x,2^{-k_x+2})} g_{k_x}(z)^p dz \right)^{1/p} \geq \epsilon \cdot 2^{-(k_x-2)/q} R^{-1/q}.
\]
We may now take a disjoint and countable collection of balls $B_\ell := B(x_\ell,2^{-r_\ell + 2})$ with $r_\ell = k_{x_\ell}$ so that $E \subset \bigcup_\ell 5B_\ell$. Thus, because $q/p \geq 1$, 
\begin{align*}
  \lambda & \leq \diam(E)/R \lesssim \frac{1}{R}\sum_\ell 5\cdot 2^{-r_\ell+2}
  \lesssim \frac{1}{R}\sum_\ell \left( \int_{B(x_\ell,2^{-r_\ell+2})} g_{r_\ell}(z)^p dz \right)^{q/p}R \\
& = \sum_{k\in\integer} \sum_{\ell : r_\ell = k} \left( \int_{B(x_\ell,2^{-k+2})} g_{k}(z)^p dz \right)^{q/p}
  \leq \sum_{k\in\integer} \left( \sum_{\ell : r_\ell = k} \int_{B(x_\ell,2^{-k+2})} g_{k}(z)^p dz \right)^{q/p} \\
& \leq \sum_{k \in \integer} \left( \int_{\real^n} g_k(z)^p dz \right)^{q/p} \approx \| u \|_{\dot{B}_{n/s,q}^s(\real^n)}^q.\qedhere
\end{align*}
\end{proof}

\begin{remark2}
  Note that the assumptions on the diameters and connectedness of $E$ and $F$ were used only to establish the estimates $\lambda \leq \diam(E)/R \lesssim \frac1R \sum_i r_i$ for any collection of balls $B_i := B(x_i,r_i)$ such that $E \subset \cup_i B_i$ and $x_i \in E$ (and similarly for $F$).
\end{remark2}

Here is an upper estimate for the Besov capacity of an annulus. The result itself is basically well known, but we have included a proof that can for all $q > 1$ be easily adapted to Ahlfors regular metric spaces as well. A similar result for compact Ahlfors $Q$-regular metric spaces and $p = q > Q$ is obtained in \cite[Lemma 1.7]{B}.

\lem{le:capacity-upper}
Let $x_0 \in \real^n$, $0 < r < R < \infty$ and $1 < q \leq \infty$. Then there exists a continuous function $u : \real^n \to \real$ with $u_{|\real^n \backslash B(x_0,R)} = 0$, $u_{|\closure[2]{B}(x_0,r)} = 1$ and
\[
  \|u\|_{\dot{B}_{n/s,q}^s(\real^n)} \leq \Psi(R/r).
\]
Here $\Psi$ is a decreasing homeomorphism from $(1,\infty)$ onto $(0,\infty)$, independent of $x_0$, $r$ and $R$.
\elem

\begin{proof}
We will work with the characterization of the Besov norm employed in lemmas \ref{le:balls-different-sizes} and \ref{le:balls-same-sizes}. For $j \geq 0$, define the balls $B_j := B(x_0,2^{-j}R)$ and the functions $f_j : \real^n \to \real$ with $f_j(y) := (j+1)^{-1}\max(0 , 1 - |x_0-y|/(2^{-j}R))$. We are now in the situation of lemma $\ref{le:balls-different-sizes}$ (i) (with $b_j = (j+1)^{-1}$), so with $F := \sum_j f_j$, we have 
\[
  \|F \|_{\dot{B}_{n/s,q}^s(\real^n)} \leq c\cdot \left( \sum_{j \geq 0} (j+1)^{-q} \right)^{1/q} =: c'
\]
with constants $c$ and $c'$ independent of $x$ and $R$. Note that $F$ is radially decreasing with respect to $x_0$ (because each $f_j$ is). Denoting by $\xi(s)$ the value of $F$ on any sphere of the form $\{ y : |x-y| = sR\}$, $0 < s < 1$, we thus have $\xi(s) > \xi(s')$ for $0 < s < s' < 1$.
In particular, defining $u : \real^n \to \real$ with
\[
  u(x) = \min(1, F(x)/\xi(r/R))
\]
we get a continuous function with $u_{|\real^n \backslash B(x_0,R)} = 0$, $u_{|\closure[2]{B}(x_0,r)} = 1$ and
\[
  \|u\|_{\dot{B}_{n/s,q}^s(\real^n)} \leq \|F\|_{\dot{B}_{n/s,q}^s(\real^n)}/\xi(r/R) \leq c'/\xi(r/R).
\]
It is obvious that the function $s \mapsto \xi(s)$ is independent of $x_0$ and $R$, and that it tends to $0$ as $s \to 1$. To see that it tends to $+\infty$ as $s \to 0$, note that for small $s > 0$ we have
\begin{align*}
  \xi(s) & = \sum_{j \geq 0} (j+1)^{-1} \max\left(0, 1 - 2^j s\right) \geq
  \frac12 \sum_{j=0}^{-\log_2(2s)} (j+1)^{-1} \stackrel{s\to 0^+}{\longrightarrow} \infty.
\end{align*}
The function $\Psi := s \mapsto c'/\xi(1/s)$ therefore satisfies the desired property.
\end{proof}

Combining these two estimates gives the following result, shown in the case of $q = n/s \geq n+1$ in \cite{Vo} (as well as in the settings of $\real$ and compact Ahlfors regular metric spaces in \cite{BS} and \cite{B} respectively).

\prop{pr:qc-necessity}
Suppose that $\varphi : \real^n \to \real^n$ is a homeomorphism such that the mapping induced by $\varphi$ is bounded on $\dot{B}_{n/s,q}^s(\real^n)$ for some $0 < s < 1$ and $ 1 < q < \infty$. Then $\varphi$ is quasiconformal.
\eprop

\begin{proof}
We will show that $\varphi$ actually satisfies the following weak quasisymmetry type condition for some constant $H > 0$:
\[
  \frac{|x-y|}{|x-z|} \leq 1 \qquad {\rm implies} \qquad
  \frac{|\varphi(x) - \varphi(y)|}{|\varphi(x) - \varphi(z)|} \leq H.
\]
This implies $\eta$-quasisymmetry for some homeomorphism $\eta : [0,\infty)\to[0,\infty)$ quantitatively (not only in $\real^n$ but in all pathwise connected doubling metric spaces); see \cite[Chapter 4]{HK} and the references therein. 

To this end, suppose that $0 < |x-y| \leq |x-z|$ and $|\varphi(x) - \varphi(y)| > |\varphi(x) - \varphi(z)|$. With $R := |\varphi(x) - \varphi(y)|$ and $r := |\varphi(x) - \varphi(z)|$, lemma \ref{le:capacity-upper} yields a function $u \in \dot{B}_{n/s,q}^{s}(\real^n)$ with $u_{|\real^n \backslash B(\varphi(x),R)} = 0$, $u_{|\closure[2]{B}(\varphi(x),r)} = 1$ and $\|u\|_{ \dot{B}_{n/s,q}^{s}(\real^n) } \leq \Psi(R/r)$.

On the other hand, let $\gamma_1 := [\varphi(x) , \varphi(z)]$, and let $\gamma_2 \subset \real^n \backslash B(\varphi(x),R)$ be a path joining $\varphi(y)$ to $\varphi(y')$, where $y' \in \real^n \backslash B(x,2|x-z|)$. Furthermore, let $E$ and $F$ be the components of $\varphi^{-1}(\gamma_1) \cap \closure[3]{B}(x,|x-z|)$ and $\varphi^{-1}(\gamma_2) \cap \closure[3]{B} (x,2|x-z|)$ containing $x$ and $y$, respectively. Then $E$ and $F$ are compact and connected subsets of $B(x,3|x-z|)$
with $(u\circ \varphi)_{|E} = 1$, $(u\circ \varphi)_{|F} = 0$ and
\[
  \min(\diam E, \diam F) \gtrsim 3|x-z|,
\]
so Lemma \ref{le:capacity-lower} tells us that $\|u\circ \varphi\|_{ \dot{B}_{n/s,q}^{s}(\real^n) } \gtrsim 1$. Combining these estimates with the boundedness assumption, we get $\Psi(R/r) \geq c$ for some constant $c > 0$ independent of $x$, $y$ and $z$. This in turn yields $R/r \leq \Psi^{-1}(c)$, which is the desired conclusion.
\end{proof}

We are now basically ready for the proof of the main result, but before that we will still record a basic lemma:

\lem{le:jacobian-bounded}
If a homeomorphism $\varphi : \real^n \to \real^n$ is quasisymmetric and $C^{-1} \leq |J_\varphi|  \leq C$ almost everywhere for some constant $C > 1$, then $\varphi$ is bi-Lipschitz.
\elem
\begin{proof}
Take $x$ and $y$ in $\real^n$, $x \neq y$, and write $r := |x-y|$ and $r' := |\varphi(x) - \varphi(y)|$. Since $\varphi$ is quasisymmetric, basic geometric considerations show that $|\varphi(B(x,r))| \approx |B(\varphi(x),r')|$, and since $|J_\varphi| \approx 1$ almost everywhere, we have
\[
  (r')^n \approx \int_{B(\varphi(x),r')} dz \approx \int_{\varphi(B(x,r))} |J_\varphi(z)| dz \approx r^n. \qedhere
\]
\end{proof}

\begin{proof}[{\bf Proof of Theorem \ref{th:bilip-necessity}}]
Write $p = n/s$ as usual. By Proposition \ref{pr:qc-necessity}, $\varphi$ is quasiconformal, so in particular it is quasisymmetric. To show that it is bi-Lipschitz, it therefore suffices to show that
\[
  C^{-1} \leq |J_\varphi|  \leq C \quad \text{ almost everywhere}
\]
for some constant $C > 1$. This will follow once we can show that $|A_k| > 0$ for only finitely many $k$, where
\[
  A_k := \{2^{-n(k+1)}\leq |J_\varphi| < 2^{-nk} \}  \quad \text{ for } k\in\integer.
\]
To this end, suppose that $|A_{k_j}| > 0$ for some sequence $k_1 < k_2 < \cdots < k_N$ of integers. For all $j$, we can find density points $x_j$ of $A_{k_j}$ such that, in addition, $x_j \in A_{k_j}$ and $\lim_{r\to 0^+} |\varphi(B(x_j,r))|/|B(x_j,r)| = |J_\varphi(x_j)|$. We can (by the quasisymmetry of $\varphi$) then take $r > 0$ small enough (depending on $N$) so that for $1 \leq j \leq N$,
\begin{itemize}\setlength{\itemsep}{5pt}
\item  $B\left( \varphi(x_j) , c_\varphi 2^{-k_j} r \right) \subset \varphi\left( B(x_j, r) \right) \subset B\left( \varphi(x_j) , C_\varphi 2^{-k_j} r \right)$,
\item the balls $9 B(x_j , r)$ are pairwise disjoint,
\item the balls $9 B\left(\varphi(x_j), C_\varphi 2^{-k_j} r \right)$ are pairwise disjoint and
\item $\left|B(x_j, r) \cap A_{k_j} \right| \geq \lambda \left|B(x_j, r) \right|$, where $0 < \lambda < 1$ will be fixed later.
\end{itemize}
Let $b_j$, $1 \leq j \leq N$, be positive numbers and define
\[
  g_j(x) := b_j \max\left(0 , 1 - \frac{|x - \varphi(x_j)|}{c_\varphi 2^{-k_j}r} \right)
\]
so that $\supp g_j \subset B\left( \varphi(x_j) , c_\varphi 2^{-k_j} r \right)$ and $\Lip g_j \leq b_j 2^{k_j} (c_\varphi r)^{-1}$. Writing $G := \sum_{1 \leq j \leq N} g_j$, Lemma \ref{le:balls-different-sizes} (i) (with the sequence augmented with zeroes so that each ``$b_j$'' here corresponds to the $2^{k_j}$th number) yields $\|G\|_{\dot{B}_{p,q}^s(\real^n)} \lesssim \|(b_j)_{1\leq j\leq N}\|_{\ell^q}$.

On the other hand, writing $h_j := g_j \circ \varphi$ we have $\supp h_j \subset B(x_j , r)$, so if $H := G\circ \varphi$, lemma \ref{le:balls-same-sizes} (ii) yields $\|H\|_{\dot{B}_{p,q}^s(\real^n)} \gtrsim \| \left( r^{-s}\|h_j\|_{L^p(\real^n)}\right)_{1 \leq j \leq N} \|_{\ell^p} $. In order to estimate $\|h_j\|_{L^p(\real^n)}$, note that
\begin{align*}
  \|h_j\|_{L^p(\real^n)}^{p}
& \geq \int_{B(x_j,r)\cap A_{k_j}} (g_j \circ \varphi)(x)^p dx
  \approx 2^{k_j n} \int_{B(x_j,r)\cap A_{k_j}} (g_j \circ \varphi)(x)^p |J_\varphi(x)| dx \\
& = 2^{k_j n} \int_{\varphi\left(B(x_j,r)\cap A_{k_j}\right)} g_j(x)^p dx.
\end{align*}
We recall that the pullback measure induced by $\varphi$ has the $A_\infty$ property (with respect to the Lebesgue measure on $\real^n$), i.e.~for any $\varepsilon > 0$, there is a $\delta > 0$ so that whenever $B \subset \real^n$ is a ball and $E$ is a measurable subset of $B$ with $|E| < \delta |B|$, one has $|\varphi(E)| < \varepsilon |\varphi(B)|$; see e.g.~\cite[Corollary 4.17]{K}. In particular, since $g_j \geq b_j/2$ on a subset of $\varphi\left(B(x_j,r)\right)$ of measure $\approx (2^{-k_j} r)^n$, $|g_j| \leq b_j$ everywhere and $|B(x_j,r)\backslash A_{k_j}| \leq (1-\lambda) |B(x_j,r)|$, we may take $\lambda$ sufficiently close to $1$ so that
\[
  \int_{\varphi\left(B(x_j,r)\cap A_{k_j}\right)} g_j(x)^p dx
  \gtrsim (2^{-k_j} r)^n b_j^p.
\]
Thus, $\|h_j\|_{L^p(\real^n)} \gtrsim r^{n/p} b_j = r^{s} b_j$, which yields
$
  \|H\|_{\dot{B}_{p,q}^s(\real^n)} \gtrsim \| \left( b_j\right)_{1 \leq j \leq N} \|_{\ell^p}.
$
Recalling that $H = G\circ \varphi$, we therefore have
\beqla{eq:lplq-estimate}
  \| \left( b_j\right)_{1 \leq j \leq N} \|_{\ell^p} \lesssim \|(b_j)_{1\leq j\leq N}\|_{\ell^q}.
\eeq

We shall next establish the converse estimate. Now let the $x_j$ and $b_j$ be as above, but this time choose $r > 0$ small enough (again, depending on $N$) so that for $1 \leq j \leq N$,
\begin{itemize}\setlength{\itemsep}{5pt}
\item $B\left(\varphi(x_j), c_\varphi r\right) \subset \varphi\left( B(x_j, 2^{k_j} r) \right) \subset B\left( \varphi(x_j), C_\varphi r \right)$,
\item the balls $9 B(x_j, 2^{k_j} r)$ are pairwise disjoint,
\item the balls $9 B\left( \varphi(x_j), C_\varphi r \right)$ are pairwise disjoint and
\item $\left|B(x_j, 2^{k_j} r) \cap A_{k_j} \right| \geq \lambda \left|B(x_j, 2^{k_j} r) \right|$, where $0 < \lambda < 1$ will again be fixed later.
\end{itemize}
Define $ g_j(x) := b_j \max\left(0 , 1 - |x - \varphi(x_j)|/(c_\varphi r)\right)$, $h_j := g_j \circ \varphi$ and $G$ and $H$ accordingly as above. Lemma \ref{le:balls-same-sizes} (i) yields  $\|G\|_{\dot{B}_{p,q}^s(\real^n)} \lesssim \|(b_j)_{1\leq j\leq N}\|_{\ell^p}$, and lemma \ref{le:balls-different-sizes} (ii) (with $R = 2^{k_N}r$) yields
\[
  \|H\|_{\dot{B}_{p,q}^s(\real^n)}
  \gtrsim \left\| \left(2^{(k_N - k_j)s} (2^{k_N}r)^{-s} \|h_j\|_{L^{p}(\real^n)}\right)_{1\leq j\leq N} \right\|_{\ell^q}.
\]
Using the $A_\infty$ property and choosing $\lambda$ sufficiently close to $1$ in the same way as above, we get $\|h_j\|_{L^{p}(\real^n)} \gtrsim (2^{k_j} r)^{n/p} b_j = (2^{k_j} r)^s b_j$, so that
$
  \|H\|_{\dot{B}_{p,q}^s(\real^n)} \gtrsim \| \left( b_j\right)_{1 \leq j \leq N} \|_{\ell^q},
$
and by the boundedness assumption, 
$\| \left( b_j\right)_{1 \leq j \leq N} \|_{\ell^q} \lesssim \|(b_j)_{1\leq j\leq N}\|_{\ell^p}$.
Combining this with the converse estimate \refeq{eq:lplq-estimate} above, we get
\[
  \| \left( b_j\right)_{1 \leq j \leq N} \|_{\ell^q} \approx \|(b_j)_{1\leq j\leq N}\|_{\ell^p}
\]
with the implied constants independent of the number $N$ of levels $k_j$ with $|A_{k_j}| > 0$ as well as the numbers $b_j > 0$. Since $p \neq q$, it is thus easily seen that $N$ must be bounded from above by some constant depending only on the dimension $n$, the parameters $s$ and $q$ and the norm of the operator induced by $\varphi$.
\end{proof}

We end this section with a few remarks concerning the case $q \leq 1$. For $q > 1$, Lemma \ref{le:capacity-upper} essentially implies the existence of a function in $\dot{B}^s_{n/s,q}(\real^n)$ which blows up in the vicinity of some point. When $0 < q \leq 1$, such functions do not exist, because every function in $\dot{B}^s_{n/s,q}(\real^n)$ agrees almost everywhere with a bounded and (absolutely) continuous function. This result can be found in \cite[Sections 2.8.3 and 5.2.5]{T}, but it can also be seen fairly easily by an argument similar to the proof of Lemma \ref{le:capacity-lower} (namely, combining the inequalities \refeq{eq:linf-embedding} and \refeq{eq:linf-embedding-2} essentially gives $|u(x)-u(y)| \lesssim \sum_{2^{-k} \leq c|x-y|} \|g_k\|_{L^{n/s}}$ for Lebesgue points $x$ and $y$ of $u$, where $(\|g_k\|_{L^{n/s}})_{k\in\integer} \in \ell^1$).

However, under an a priori regularity assumption on $\varphi$ in the spirit of Reimann \cite{R}, the bi-Lipschitz necessity can also be shown for $q \leq 1$. In particular, it is enough to assume that $\varphi$ is in $W^{1,1}_{\rm{loc}}(\real^n)$. We shall show that, in this case, the boundedness of our composition operator on $\dot{B}^s_{n/s,q}(\real^n)$ for $0 < q \leq 1$ implies the quasiconformality of $\varphi$, and then use interpolation to deduce the bi-Lipschitz regularity of $\varphi$.

To this end, we will make use of the following basic lemma:

\lem{le:small-q}
Let $0 < s < 1$ and $0 < q \leq n/s$. There exists a dimensional constant $c(n) > 0$ with the following property:
For any continuous function $u : \real^n \to \real$ such that $u \geq 1$ on $Q \cap Z$ and $u \leq 0$ on $(3Q \backslash 2Q) \cap Z$,
where $Q = [-A_1,A_1]\times \cdots \times [-A_n,A_n]$ $0 < A_i < \infty$ for $1 \leq i \leq n$ and $Z$ is any set with
\beq\label{eq:small-q-density}
 |3Q \backslash Z| \leq c(n) \min(A_i)^n,
\eeq
we have
\[
  \|u\|_{\dot{B}^s_{n/s,q}(\real^n)} \geq c \left( \frac{A_1 A_2 \cdots A_n}{\min(A_i)^n} \right)^{s/n},
\]
with $c > 0$ independent of $u$, the $A_i$ and $Z$.
\elem

\begin{proof}
For simplicity of notation, we shall consider the case $n = 3$; the proof for general $n$ follows the same idea. By the monotonicity of $\ell^q$ norms (see \refeq{eq:besov-characterizations}), it suffices to consider the case $q = 3/s =: p$. Furthermore, by scaling invariance, we may normalize the situation to $1 = A_1 \leq A_2$, $A_3$. We assume that \refeq{eq:small-q-density} holds for some $c(n)$, which will be chosen more precisely later.

Now, set $Q_{k,\ell} := [1/2,5/2]\times [k-A_2,(k+1)-A_2] \times [\ell-A_3,(\ell+1)-A_3]$ for $0 \leq k < 2\lfloor A_2 \rfloor$, $0 \leq \ell < 2\lfloor A_3 \rfloor$. Putting $Q^{(1)}_{0,0} := ([1/2,1]\times \real^2)\cap Q_{0,0}$ and $Q^{(2)}_{0,0} := ([2,5/2]\times \real^2)\cap Q_{0,0}$, we have $|Q^{(i)}_{0,0}| = 1/2 = \min(A_i)^3/2$, so $c(n)$ can be chosen small enough to guarantee that $|Q^{(i)}_{0,0} \cap Z| \geq 1/4$. For $i = 1$, this can be written out as
\[
  \iint_{[-A_2,1-A_2]\times [-A_3,1-A_3]} \left( \int_{\frac12}^1 \chi_Z (x_1,x_2,x_3) dx_1 \right) dx_2 dx_3 \geq 1/4,
\]
so there exists a point $(y_2,y_3) \in [-A_2,1-A_2]\times [-A_3,1-A_3]$ such that $H^1(E) \gtrsim 1$ for $E := ([1/2,1]\times \{(y_2,y_3)\})\cap Z \subset Q \cap Z$. One may similarly find a point $(z_2,z_3) \in [-A_2,1-A_2]\times [-A_3,1-A_3]$ such that $F := ([2,5/2]\times \{(z_2,z_3)\})\cap Z \subset (3Q \backslash 2Q) \cap Z$ has $H^1$-measure $\gtrsim 1$.

We now note that $Q_{0,0}$ can be viewed as an Ahlfors regular metric measure space (with the Ahlfors regularity constants depending only on the dimension), and we shall make use of a metric version of Lemma \ref{le:capacity-lower} (see Lemma \ref{le:capacity-lower-m} in the following section). Since $E$ and $F$ each lie on a line, it is quite easy to see that we have $H^{1}(E) \lesssim \sum_i r_i$ for any collection of balls $B(x_i,r_i)$ as in the remark after the proof of Lemma \ref{le:capacity-lower} (and similarly for F), so we have
\[
  \| u_{| Q_{0,0}} \|_{\dot{B}^s_{p,p}(Q_{0,0})} \geq c > 0.
\]
The same estimate holds for any $Q_{k,\ell}$ in place of $Q_{0,0}$ as well. Since the interiors of the orthotopes $Q_{k,\ell}$ are pairwise disjoint, we therefore get
\begin{align*}
\| u \|_{\dot{B}^s_{p,p}(\real^3)}^p & \approx \iint_{\real^3 \times \real^3} \frac{|u(x)-u(y)|^p}{|x-y|^{2\cdot 3}}dx dy \\
& \geq \sum_{k=0}^{2\lfloor A_2 \rfloor-1} \sum_{\ell=0}^{2\lfloor A_3 \rfloor-1} \iint_{Q_{k,\ell}\times Q_{k,\ell}} \frac{|u(x)-u(y)|^p}{|x-y|^{2\cdot 3}}dx dy \\
& \geq \sum_{k=0}^{2\lfloor A_2 \rfloor-1} \sum_{\ell=0}^{2\lfloor A_3 \rfloor-1} c^p \approx 2\lfloor A_2 \rfloor \cdot 2\lfloor A_3 \rfloor \geq A_2 A_3. \qedhere
\end{align*}
\end{proof}

\begin{remark2}
Examining the proof above shows that if $A_1 = \min(A_i)$, the assumption $u \leq 0$ can be relaxed to hold on $3Q \cap \left([2A_1,(5/2)A_1] \times \real^{n-1}\right)\cap Z$.
\end{remark2}

The plan is now to use the regularity assumption on $\varphi$ stated above to localize the situation into subsets of $\real^n$ where $\varphi$ behaves roughly like a linear mapping:

\thm{th:small-q2}
Suppose that $\varphi$ in $W^{1,1}_{\rm{loc}}(\real^n)$ and that the mapping induced by $\varphi$ is bounded on $\dot{B}^s_{n/s,q}(\real^n)$ for some $0 < s < 1$ and $0 < q \leq 1$. Then $\varphi$ is bi-Lipschitz.
\ethm
\begin{proof}
Recall that the Sobolev regularity of $\varphi$ implies approximate differentiability almost everywhere, i.e.~that for almost every $x\in \real^n$, $x$ is a density point of
\[
  Z_{x,\varepsilon} := \left\{y : \frac{|\varphi(y) - \varphi(x) - D\varphi(x)(y-x)|}{|y-x|} < \varepsilon\right\}
\]
for all $\varepsilon > 0$; see e.g.~\cite[Section 6.1.3]{EG}.

We shall first establish the quasiconformality of $\varphi$. It is enough to establish the estimate $|D\varphi(x)|^n \leq C|J_\varphi(x)|$ for all points $x$ where $\varphi$ is approximately differentiable (here $|D\varphi|$ stands for the operator norm of the Jacobian matrix of $\varphi$). It obviously suffices to consider the points $x$ where $D\varphi (x) \neq 0$, and by shift invariance we may assume that $x = \varphi(x) = 0$.

Now by the singular value decomposition we may write $D\varphi(0) = U\Sigma V$, where $U$ and $V$ are isometries of $\real^n$ and $\Sigma$ is a diagonal matrix with diagonal entries $\sigma_1 \geq \sigma_2 \geq \cdots \geq \sigma_n \geq 0$. Our estimate shall be
\beqla{qc-estimate}
  \frac{|D\varphi(0)|^n}{|J_\varphi(0)|} \lesssim \|\varphi\|^{n/s},
\eeq
where $\|\varphi\|$ stands for the operator norm of the mapping induced by $\varphi$ and the implied constant is independent of $\varphi$. Note that for $\psi := U^{-1}\varphi(V^{-1}\cdot)$ we have $|D\varphi(0)| = |D\psi(0)|$, $|J_\varphi(0)| = |J_\psi(0)|$ and $\|\varphi\| = \|\psi\|$, so it suffices to establish \refeq{qc-estimate} for $\psi$ instead of $\varphi$. In particular, we have $\psi(x) = \Sigma x + |x|\epsilon(x)$ with $|\epsilon(x)| < \varepsilon$ whenever $x \in V Z_{0,\varepsilon}$.

We will first establish the desired estimate with the additional assumption that $\sigma_1\sigma_2\cdots\sigma_n \neq 0$. We fix a Lipschitz function $f$ with $\chi_{[-5/4,5/4]^n} \leq f \leq \chi_{\real^n \backslash (-7/4,7/4)^n}$ and write $f_h = f(h^{-1}\cdot)$ for $h > 0$. In particular,
\[
  \|f\|_{\dot{B}^s_{n/s,q}(\real^n)} = \|f_h\|_{\dot{B}^s_{n/s,q}(\real^n)}
\]
for all $h$. By approximate differentiability, we may pick $\delta > 0$ so that we have \refeq{eq:small-q-density} for $A_i := \delta \sigma_i^{-1}$ and $Z := V Z_{0,(12\sqrt{n})^{-1}\sigma_n}$. Since $\Sigma$ maps
\[
  Q_{\delta} := \left[-\frac{\delta}{\sigma_1},\frac{\delta}{\sigma_1} \right] \times \cdots \times \left[-\frac{\delta}{\sigma_n},\frac{\delta}{\sigma_n} \right]
\]
into $[-\delta,\delta]^n$ and $3Q_\delta\backslash 2Q_\delta$ into $\real^n \backslash [-2\delta,2\delta]^n$, we have
\[
  \psi\left(Q_\delta \cap Z\right) \subset \left[-\frac{5\delta}{4},\frac{5\delta}{4}\right]^n \quad \rm{and} \quad \psi\left( (3Q_\delta\backslash 2Q_\delta)\cap Z\right) \subset \real^n \backslash \left(-\frac{7\delta}{4},\frac{7\delta}{4}\right)^n.
\]
In particular, $f_\delta \circ \psi$ satisfies the assumptions of Lemma \ref{le:small-q} (with the $A_i$ and $Z$ as above), so
\begin{align*}
\|\psi\| & \approx \|\psi\| \|f_\delta\|_{\dot{B}^s_{n/s,q}(\real^n)} \geq \|f_\delta \circ \psi\|_{\dot{B}^s_{n/s,q}(\real^n)} \gtrsim \left(\frac{\sigma_1^n}{\sigma_1\cdots \sigma_n} \right)^{s/n} = \left(\frac{|D\psi(0)|^n}{|J_\psi(0)|}\right)^{s/n}.
\end{align*}

We shall then show that we must necessarily have $\sigma_1\sigma_2\cdots\sigma_n \neq 0$, so that the additional assumption made above in fact holds in all possible cases. Assuming the contrary, we have $\sigma_i = 0$ when $i > i_0$ and $\sigma_i > 0$ when $i \leq i_0$ for some index $i_0 < n$. For an arbitrary $\varepsilon \in (0,\sigma_{i_0})$, take $\delta > 0$ so that \refeq{eq:small-q-density} holds with $A_i := \delta \sigma_i^{-1}$ for $i \leq i_0$, $A_i := \delta\varepsilon^{-1}$ for $i > i_0$ and $Z := VZ_{0,(12\sqrt{n})^{-1}\varepsilon}$. Now $\Sigma$ maps
\[
  Q_{\varepsilon,\delta} := \left[-\frac{\delta}{\sigma_1},\frac{\delta}{\sigma_1} \right] \times \cdots \times \left[-\frac{\delta}{\sigma_{i_0}},\frac{\delta}{\sigma_{i_0}} \right] \times \left[-\frac{\delta}{\varepsilon},\frac{\delta}{\varepsilon}\right]^{n-i_0}
\]
into $[-\delta,\delta]^n$ and $3Q_{\varepsilon,\delta} \cap \left([2\delta/\sigma_1,(5/2)\delta/\sigma_1] \times \real^{n-1}\right) =: Q'_{\varepsilon,\delta}$ into $[2\delta,(5/2)\delta]\times \real^{n-1}$. Thus,
\[
  \psi\left(Q_{\varepsilon,\delta} \cap Z \right) \subset \left[-\frac{5\delta}{4},\frac{5\delta}{4}\right]^n \quad \rm{and} \quad \psi\left( Q'_{\varepsilon,\delta} \cap Z \right) \subset \left(\frac{7\delta}{4},\infty\right)\times \real^{n-1},
\]
so $f_{\delta} \circ \psi$ satisfies the conditions of the remark after Lemma \ref{le:small-q}'s proof (with the $A_i$ and $Z$ as above), and as before we get
\[
  \|\psi\| \gtrsim \left(\frac{\sigma_1^n}{\sigma_1\cdots\sigma_{i_0} \varepsilon^{n-i_0}}\right)^{s/n}.
\]
Since $n - i_0 > 0$, taking $\varepsilon \to 0^{+}$ yields a contradiction with the boundedness assumption.

All in all, $\varphi$ is quasiconformal, and so the mapping induced by $\varphi$ is bounded on $\dot{B}^{t}_{n/t,n/t}(\real^n)$ for all $t \in (0,1)$. In particular, interpolating between such a space and $\dot{B}^s_{n/s,q}(\real^n)$ (see ~\cite[Theorems 9.1 and 8.1]{KMM}), we obtain boundedness on $\dot{B}^{s'}_{n/s',q'}(\real^n)$ for some $s' \in (0,1)$ and $q' \in (1,n/s')$. Theorem \ref{th:bilip-necessity} thus implies the bi-Lipschitz regularity of $\varphi$.
\end{proof}

\begin{remark2}
The proof above makes use of complex interpolation of function spaces which are not Banach spaces. We refer to \cite{KMM} for a treatment of the interpolation of the spaces in question.
\end{remark2}

Another approach to the case $q \leq 1$ is in the spirit of Astala \cite{A}: namely, instead of Sobolev regularity, we assume that the mapping induced by $\varphi$ is uniformly bounded from $\dot{B}^s_{n/s,q}(\Omega)$ into $\dot{B}^s_{n/s,q}(\varphi^{-1}(\Omega))$ for all bounded domains $\Omega \subset \real^n$. By a Besov space on a domain $\Omega$ we mean the space (quasi-)normed by Haj\l asz $s$-gradients (conditions \refeq{eq:besov-characterizations} and \refeq{eq:besov-characterizations-2} with $\Omega$ in place of $\real^n$).

\thm{th:small-q-3}
Suppose that $0 < q \leq 1$ and $\varphi$ is as above. Then $\varphi$ is bi-Lipschitz.
\ethm

\begin{proof}
As usual, let $p = n/s$. To show the quasiconformality of $\varphi$, it suffices to show that
\beqla{eq:qs-condition}
  \frac{|\varphi (B)|}{\left(\diam \varphi (B)\right)^n} \geq c > 0
\eeq
for all open balls $B \subset \real^n$; see \cite[p.~64]{HK2}. To this end, let $\omega_1$ and $\omega_2$ be boundary points of $\Omega := \varphi(B)$ with distance $\diam \Omega$.
Letting
$
  u(z) := |z - \omega_1|/\diam \Omega
$
for $z \in \Omega$, an easy computation shows that $u \in \dot{B}^s_{p,q}(\Omega)$ with
\[
  \|u\|_{\dot{B}^s_{p,q}(\Omega)} \leq C \left(\frac{|\Omega|}{\left(\diam \Omega\right)^n} \right)^{s/n}
\]
(take $g_k = 2^{-k(1-s)}\chi_{\Omega} / \diam(\Omega)$ for $k \geq k_\Omega-1$ and $g_k = 0$ otherwise, where $2^{-k_\Omega} \leq \diam (\Omega) < 2^{-k_\Omega + 1}$).

Now $x_i := \varphi^{-1}(\omega_i)$, $i = 1$, $2$, are boundary points of $B$ to which $u \circ \varphi$ extends continuously as $0$ and $1$ respectively. Let $(g_k)_{k\in\integer}$ be an $s$-gradient of $u\circ \varphi$ with $\sum_k \|g_k\|_{L^{p}(B)}^q \approx \|u\circ \varphi\|_{\dot{B}^s_{p,q}(B)}^q$ and take $\ell \in \integer$ so that $2^{-2\ell+2} \leq |x_1 - x_2| < 2^{-2\ell+4}$. Writing
$
  A_j^{x_i} := A(x_i,2^{-2j-1},2^{-2j}) \cap B
$
for $j \geq \ell$, we have $|A^{x_i}_j| \approx 2^{-2jn}$, so the estimates in the proof of Lemma \ref{le:capacity-lower} yield
\[
  |(u\circ \varphi)_{A^{x_i}_{j}} - (u\circ \varphi)_{A^{x_i}_{j+1}}| \lesssim \sum_{2^{-2j-2} \leq 2^{-k} \leq 2^{-2j+1}} \|g_k\|_{L^{p}(B)},
\]
and summing over $j \geq \ell$ yields
\[
  |(u\circ \varphi)(x_i) - (u\circ \varphi)_{A^{x_i}_{\ell}}| \lesssim \sum_{2^{-k} \leq 2^{-2\ell +1}} \|g_k\|_{L^p(B)} \leq \sum_{2^{-k} \leq |x_1 - x_2|} \|g_k\|_{L^p(B)}.
\]
Also,
\begin{align*}
  |(u\circ \varphi)_{A^{x_1}_\ell} - (u\circ \varphi)_{A^{x_2}_\ell}|
& \lesssim \sum_{2^{-2\ell +1} \leq 2^{-k} \leq 2^{-2\ell +3}} \|g_k\|_{L^p(B)}
& \leq \sum_{2^{-k} \leq 2|x_1 - x_2|} \|g_k\|_{L^p(B)}.
\end{align*}
Altogether we thus have
\[
  |(u\circ \varphi)(x_1) - (u\circ \varphi)(x_2)| \lesssim \sum_{2^{-k} \leq 2|x_1 - x_2|} \|g_k\|_{L^p(B)} \leq \left( \sum_{2^{-k} \leq 2|x_1 - x_2|} \|g_k\|_{L^p(B)}^q \right)^{1/q}.
\]
Noting that the left hand side above is $1$, we thus have \[1 \lesssim \|u\circ \varphi\|_{\dot{B}^s_{p,q}(B)} \lesssim \|u\|_{\dot{B}^s_{p,q}(\Omega)} \lesssim \left(\frac{|\Omega|}{(\diam \Omega)^n} \right)^{s/n},\] showing the quasiconformality of $\varphi$. One can conclude by interpolating as in the proof of Theorem \ref{th:small-q2}.
\end{proof}

\section{Besov spaces on metric spaces}\label{section:metric}

The purpose of this section is to explain how the main results of the previous section extend to the setting of metric spaces with sufficiently reasonable geometry. The main reference for the required facts about such spaces and quasiconformal mappings on them is \cite{HK}, particularly Chapters 4 and 7. For the equivalence of the different Besov type norms, we again refer to \cite{GKZ} and \cite{KYZ2}.

The main results in the previous chapter have natural counterparts in the context of metric spaces, and thanks to our methods, the proofs are the same. Note that in a connected $Q$-Ahlfors regular\footnote{Recall that $Q$-Ahlfors regularity means that $\mu(B_d(x,r)) \approx r^Q$ for $x \in X$ and $0 < r < \diam(X)$.} metric measure space $(X,d,\mu)$, we have $\mu(A_d(x,\lambda/2,\lambda)) \approx \lambda^Q$ for $x \in X$ and $0 < \lambda < \diam X$, since $B_d(y,\lambda/4) \subset A_d(x,\lambda/2,\lambda)$ for any $y \in \partial B_d(x,3\lambda/4)$. In the Lemmas \ref{le:balls-different-sizes-m}--\ref{le:capacity-upper-m} below, $X := (X,d,\mu)$ will be a pathwise connected $Q$-Ahlfors regular metric measure space, $Q > 1$. All measures appearing in this chapter are assumed to be Borel regular.

\lem{le:balls-different-sizes-m}

Let $0 < s < 1$, $1 \leq q \leq \infty$ and $0 < R < \infty$. Suppose that $(B_j)_{j = 0}^\infty$ is a sequence of balls in $X$ such that $B_j$ has radius $2^{-j } R$ for all $j \geq 0$, and that $(f_j)_{j=0}^\infty$ is a sequence of measurable functions on $X$ with $\supp f_j \subset \closure[2]{B_j}$ for all $j \geq 0$. Write $F := \sum_{j\geq 0} f_j$.

(i) If the functions $f_j$ are Lipschitz with
\[
  \Lip f_j \leq 2^{j} R^{-1} b_j, \quad b_j > 0,
\]
for all $j \geq 0$, we have
\[
  \|F\|_{\dot{B}_{Q/s,q}^{s}(X)} \lesssim \| (b_j)_{j \geq 0} \|_{\ell^q}.
\]

(ii) If the balls $9 B_j$ are pairwise disjoint, we have
\[
  \|F\|_{\dot{B}_{Q/s,q}^{s}(X)} \gtrsim \left\| (2^{j s} R^{-s} \|f_j\|_{L^{Q/s}(X)})_{j\geq 0} \right\|_{\ell^q}.
\]
The implied constants in both conclusions are independent of $R$.
\elem

\lem{le:balls-same-sizes-m}

Let $0 < s < 1$, $1 \leq q \leq \infty$ and $0 < R < \infty$. Suppose that $(B_j)_{j=0}^\infty$ is a collection of balls in $X$ such that each ball has radius $R$ and the balls $9 B_j$ are pairwise disjoint. Suppose also that $(f_j)_{j=0}^\infty$ is a sequence of functions on $X$ with $\supp f_j \subset \closure[2]{B_j}$ for all $j \geq 0$. Write $F := \sum_{j \geq 0} f_j$.

(i) If the functions $f_j$ are Lipschitz with
\[
  \Lip f_j \leq R^{-1} b_j, \quad b_j > 0,
\]
for all $j \geq 0$, we have
\[
  \|F\|_{\dot{B}_{Q/s,q}^s(X)} \lesssim \| (b_j)_{j \geq 0} \|_{\ell^{Q/s}}.
\]

(ii) We have (with the functions $f_j$ not necessarily Lipschitz)
\[
  \|F\|_{\dot{B}_{Q/s,q}^s(X)} \gtrsim \left\| (R^{-s}\|f_j\|_{L^{Q/s}(X)})_{j\geq 0}  \right\|_{\ell^{Q/s}}.
\]
The implied constants in both conclusions are independent of $R$.
\elem

\lem{le:capacity-lower-m}
Suppose that $0 < s < 1$, $1 \leq q < \infty$ and that $B_R$ is a ball of radius $R < \diam X$ in $X$. If $0 < \lambda \leq 1$, $E$ and $F$ are two such compact and connected subsets of $B_R$ that
\[
  \min(\diam E, \diam F) \geq \lambda R
\]
and $u$ is a continuous function on $X$ with $u_{|E} \leq 0$ and $u_{|F} \geq 1$, we have
\[
  \| u \|_{\dot{B}_{Q/s,q}^s(X)} \geq c \lambda^{1/q},
\]
where $c > 0$ depends only on $q$, $s$ and the data on $X$.
\elem

\lem{le:capacity-upper-m}
Let $x_0 \in X$, $0 < r < R < \diam X$ and $1 < q \leq \infty$. Then there exists a continuous function $u : X \to \real$ with $u_{|X \backslash B(x_0,R)} = 0$, $u_{|\closure[2]{B}(x_0,r)} = 1$ and
\[
  \|u\|_{\dot{B}_{Q/s,q}^s(X)} \leq \Psi(R/r).
\]
Here $\Psi$ is a decreasing homeomorphism from $(1,\infty)$ onto $(0,\infty)$, independent of $x$, $r$ and $R$.
\elem

Before coming to the main result of this section, we will formulate more precisely the assumptions on the metric spaces under consideration, and note some of their consequences. We refer to \cite{HK} for the necessary definitions (such as those of linear local connectivity and quasiconvexity).

In the three results below, $X := (X,d,\mu)$ is assumed to be a proper, quasiconvex and $Q$-Ahlfors regular ($Q > 1$) metric measure space. $X$ is also assumed to support a weak $(1,Q)$-Poincar\'e inequality. $Y := (Y,d',\nu)$ is assumed to be a pathwise connected, $Q$-Ahlfors regular, locally compact and linearly locally connected (LLC) metric measure space. As $X$ and $Y$ are both locally compact, $\mu$ and $\nu$ are comparable to the $Q$-Hausdorff measures on $X$ and $Y$ respectively (see \cite[Remark 3.4]{HK} and the references therein), so we may in fact assume that $\mu$ and $\nu$ \emph{are} said Hausdorff measures. We assume that either the spaces $X$ and $Y$ are both bounded or they are both unbounded.

The fact that $X$ is proper means that it is also complete, so by the main result of \cite{KZ} and the Poincar\'e inequality assumption, $X$ supports a weak $(1,p)$-Poincar\'e inequality for some $p < Q$.

The following proposition generalizes the case $q = Q/s$ obtained for compact spaces in \cite{B}.

\prop{pr:qc-necessity-m}
Suppose that $\varphi : X \to Y$ is a homeomorphism such that the mapping induced by $\varphi$ is bounded from $\dot{B}_{Q/s,q}^s(Y)$ to $\dot{B}_{Q/s,q}^s(X)$ for some $0 < s < 1$ and $ 1 < q < \infty$. Then $\varphi$ is quasiconformal.
\eprop

\begin{proof}
What we want to show is that
\[
  \frac{ d'(\varphi(x),\varphi(y)) }{ d'(\varphi(x),\varphi(z)) } \leq H < \infty
\]
whenever $0 < d(x,y) \leq d(x,z)$ and $d(x,z)$ is sufficiently small (depending on $x$). Denoting by $C_L$ the LLC constant of $Y$, it will be enough to consider the case
\[
  \frac{ d'(\varphi(x),\varphi(y)) }{ d'(\varphi(x),\varphi(z)) } > C_L^2.
\]
Writing $R := d'(\varphi(x),\varphi(y))$ and $r := d'(\varphi(x),\varphi(z))$, Lemma \ref{le:capacity-upper-m} implies the existence of a continuous function $u \in \dot{B}_{Q/s,q}^s(Y)$ with $u_{|\closure[2]{B}_{d'}(\varphi(x),C_L r)} = 1$, $u_{|Y\backslash B_{d'}(\varphi(x),R/C_L)} = 0$ and $\|u\|_{\dot{B}_{Q/s,q}^s(Y)} \leq \Psi_{Y}( (R/C_L)/(C_L r) )$. Now, choosing $y' \in X$ so that $d(x,y') \geq 2d(x,z)$ and $d'(\varphi(x),\varphi(y')) \geq R$, the local linear connectivity of $Y$ yields compact and connected sets $\gamma_1 \subset \closure[2]{B}_{d'}(\varphi(x),C_L r)$ and $\gamma_2 \subset Y \backslash B_{d'}(\varphi(x), R/C_L)$ joining $\varphi(x)$ with $\varphi(z)$ and $\varphi(y)$ with $\varphi(y')$ respectively. Letting $E$ and $F$ be the components containing $x$ and $y$ of $\gamma_1^{-1}(E)\cap \closure[2]{B}_d(x, d(x,z))$ and $\gamma_2^{-1}(f)\cap \closure[2]{B}_d(x, 2d(x,z))$ respectively, we get as in the proof of Proposition \ref{pr:qc-necessity} that $\Psi_Y \left( (R/C_L) / (C_L r) \right) \geq c$, i.e.~$R/r \leq C_L^2 \Psi_Y^{-1}(c)$ for some $c > 0$ depending only on $\varphi$ and the data on the metric spaces.
\end{proof}

\begin{remark2}
We point out that the only assumptions on the metric spaces in question that we used in the proof above were the Ahlfors regularity and connectedness of both spaces, as well as the linear local connectivity of $Y$.
\end{remark2}

For the case where $q \neq Q/s$ we will need the absolute continuity properties of quasisymmetric mappings, see \cite[Chapter 7]{HK}. It is here that we use the assumption that $\mu$ and $\nu$ are the Hausdorff measures of $X$ and $Y$ respectively. The fact that quasiconformality implies quasisymmetry follows from our assumptions on the metric spaces as well as the assumption that $\varphi$ maps bounded sets onto bounded sets, see \cite[Theorem 5.7 and Corollary 4.10]{HK}.

\thm{th:bilip-necessity-m}
Suppose that $\varphi : X \to Y$ is a homeomorphism such that the mapping induced by $\varphi$ is bounded from $\dot{B}_{Q/s,q}^s(Y)$ to $\dot{B}_{Q/s,q}^s(X)$ for some $0 < s < 1$ and $ 1 < q < \infty$, $q \neq Q/s$. Suppose also that $\varphi$ maps bounded sets onto bounded sets. Then $\varphi$ is bi-Lipschitz.
\ethm

\begin{proof}
As pointed out above, $\varphi$ is now quasisymmetric. The volume derivative $\mu_\varphi$ of $\varphi$ is defined by
\[
  \mu_\varphi(x) = \lim_{r\to \infty} \frac{\nu\left(\varphi\left(B(x,r)\right)\right)}{\mu\left(B(x,r)\right)}
\]
for almost every $x$. Then we have the usual change of variables formula with $\mu_\varphi$ in the role of ``$|J_\varphi|$'' and the pullback measure induced by $\varphi$ on $X$ is $A_\infty$-related to $\mu$ \cite[Theorem 7.11]{HK}. The proof of Theorem \ref{th:bilip-necessity} (and Lemma \ref{le:jacobian-bounded}) can thus be carried out by choosing
\[
  A_k := \left\{2^{-Q(k+1)} \leq \mu_\varphi < 2^{-Qk}\right\}.\qedhere
\]
\end{proof}

\begin{corollary}\label{co:boundedness-conditions-m}\sl
Suppose that $\varphi : X \to Y$, is a homeomorphism that maps bounded sets onto bounded sets. Then

(i) if $0 < s < 1$, the mapping induced by $\varphi$ is bounded from $\dot{B}_{Q/s,Q/s}^s (Y)$ into $\dot{B}_{Q/s,Q/s}^s (X)$ if and only if $\varphi$ is quasiconformal;

(ii) if $0 < s < 1 < q < \infty$ and $q \neq Q/s$, the mapping induced by $\varphi$ is bounded from $\dot{B}_{Q/s,q}^s (Y)$ into $\dot{B}_{Q/s,q}^s (X)$ if and only if $\varphi$ is bi-Lipschitz.
\end{corollary}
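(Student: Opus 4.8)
The plan is to obtain the corollary by pairing the two necessity statements already proved with the two converses, one trivial and one standard. The implication ``bounded $\Rightarrow$ quasiconformal'' in part~(i) is Proposition~\ref{pr:qc-necessity-m}, applicable since $q=Q/s>1$, and ``bounded $\Rightarrow$ bi-Lipschitz'' in part~(ii) is Theorem~\ref{th:bilip-necessity-m}: its hypotheses are met, as $q\neq Q/s$ by assumption and $\varphi$ maps bounded sets onto bounded sets, a property that also transfers to $\varphi^{-1}$ since $X$ and $Y$ are proper. So only the ``if'' directions remain to be addressed.

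For ``bi-Lipschitz $\Rightarrow$ bounded'' in part~(ii) I would argue directly from the Haj\l asz $s$-gradient characterization of the Besov norm on $X$ (the analogue of \refeq{eq:besov-characterizations}; see \cite{GKZ,KYZ2}). If $\varphi$ is $L$-bi-Lipschitz then, since $\mu$ and $\nu$ are the $Q$-dimensional Hausdorff measures of $X$ and $Y$, $\varphi$ distorts them by a factor at most $L^Q$, so $\|g\circ\varphi\|_{L^{Q/s}(X)}\approx\|g\|_{L^{Q/s}(Y)}$ for every nonnegative measurable $g$ on $Y$. Given an $s$-gradient $(g_k)_{k\in\integer}$ of a function $f$ on $Y$, I would group it across the $M:=\lceil\log_2 L\rceil+1$ neighbouring dyadic scales, setting $\tilde g_k:=\sum_{|j-k|\le M}g_j\circ\varphi$; since $\varphi$ shifts dyadic distance scales by at most $M$, the sequence $(\tilde g_k)_{k\in\integer}$ is, up to a fixed multiplicative constant, an $s$-gradient of $f\circ\varphi$ on $X$. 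Because $Q/s>1$, the triangle inequality in $L^{Q/s}$ followed by Young's inequality for convolution against the finitely supported kernel $\mathbf 1_{\{|m|\le M\}}$ bounds the $\ell^q$-norm of $(\|\tilde g_k\|_{L^{Q/s}(X)})_{k}$ by that of $(\|g_k\|_{L^{Q/s}(Y)})_{k}$; taking the infimum over $s$-gradients of $f$ then gives $\|f\circ\varphi\|_{\dot B^s_{Q/s,q}(X)}\lesssim\|f\|_{\dot B^s_{Q/s,q}(Y)}$.

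For ``quasiconformal $\Rightarrow$ bounded'' in part~(i), I would first note that, because $\varphi$ maps bounded sets onto bounded sets, the standing assumptions on $X$ and $Y$ together with \cite[Corollary~4.10]{HK} force $\varphi$ to be quasisymmetric. Boundedness of $f\mapsto f\circ\varphi$ on the diagonal space $\dot B^s_{Q/s,Q/s}$ under a quasisymmetric homeomorphism is then exactly the invariance established, for compact (hence, in our setting, bounded) spaces, in \cite{B} and \cite{BP}. In the unbounded case one reduces to this by a routine exhaustion: the diagonal norm is comparable to an expression of the form $\int_X\int_X|f(x)-f(y)|^{Q/s}\,d(x,y)^{-2Q}\,d\mu(x)\,d\mu(y)$ (obtained from the $C_{Q/s}(f)$ characterization), which is an increasing limit of the corresponding norms over an exhaustion of $X$ by balls $B(x_0,2^m)$; since $\varphi$ sends each such ball into a (closed, hence compact) ball of $Y$ and is globally $\eta$-quasisymmetric, one applies the compact case on $\overline{B(x_0,2^m)}$ with constants uniform in $m$ and lets $m\to\infty$. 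For $X=Y=\real^n$ one may instead interpolate, as in the introduction.

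I expect this last step to be the only genuinely nontrivial one: the quasisymmetric invariance of the diagonal Besov space is imported rather than reproved here, and since the cited statements are phrased for compact spaces, the bookkeeping needed to pass to the unbounded case — in particular obtaining uniform constants along the exhaustion — is where a little extra care is required.
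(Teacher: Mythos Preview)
Your handling of the necessity directions in both parts, and of the bi-Lipschitz sufficiency in part~(ii), matches the paper's approach (the paper simply says ``the rest is contained in the two previous results'' and treats bi-Lipschitz invariance as obvious; your explicit Haj\l asz-gradient argument is a correct elaboration of this).

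The difference lies in the sufficiency for part~(i). The paper does not go through \cite{B,BP} and an exhaustion; it simply invokes \cite[Theorem~5.1]{KYZ2}, which already establishes quasisymmetric invariance of $\dot{B}^s_{Q/s,Q/s}$ in the general Ahlfors regular metric measure space setting, with no compactness assumption. This bypasses your exhaustion step entirely. Your route is not wrong in spirit, but the step you yourself flag as delicate really is: to run the compact-case results of \cite{B,BP} on closed balls $\overline{B(x_0,2^m)}$ you would need to check that these balls, as metric measure spaces in their own right, satisfy the standing hypotheses of those papers (Ahlfors regularity with uniform constants is fine, but the additional structural assumptions there are not automatic for balls), and that the resulting invariance constants depend only on quantitative data uniform in $m$. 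None of this is impossible, but it is real work that the citation of \cite{KYZ2} renders unnecessary. I would replace your last two paragraphs by a direct appeal to \cite[Theorem~5.1]{KYZ2} (after upgrading quasiconformality to quasisymmetry via \cite[Corollary~4.10]{HK}, as you already note).
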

\begin{proof}
The sufficiency for part (i) follows from \cite[Theorem 1.4]{KYZ2}. The rest is contained in the two previous results.
\end{proof}

\section{Triebel-Lizorkin spaces}\label{section:tl}

In this section, we briefly discuss the analogous mapping question for the scaling invariant Triebel-Lizorkin spaces\footnote{We refer to e.g.~\cite[Section 5.1.3]{T} or \cite[Definition 3.1]{KYZ2} for the (Fourier-analytic) definition of $\dot{F}^s_{p,q}(\real^n)$, $0 < p < \infty$, $0 < q \leq \infty$, $s \in \real$, and \cite[Theorem 1.2]{KYZ2} for a pointwise characterization of the spaces with $0 < s < 1$, $n/(n+s) < p < \infty$ and $n/(n+s) < q \leq \infty$.} $\dot{F}^s_{n/s,q}(\real^n)$, $0 < s < 1$, $0 < q \leq \infty$. The dimension $n$ will always be at least $2$. We recall that it is shown in \cite{KYZ2} that quasiconformality is a sufficient condition for the mapping induced by a homeomorphism $\varphi$ of $\real^n$ to be bounded on $\dot{F}^s_{n/s,q}(\real^n)$ when $0 < s < 1$ and $n/(n+s) < q \leq \infty$. Here we provide the converse to this using our results from section \ref{section:real}:

\prop{pr:tl-qc-necessity}
Suppose that the mapping induced by $\varphi$ is bounded on $\dot{F}^s_{n/s,q}(\real^n)$ for some $0 < s < 1$ and $0 < q \leq \infty$. Then $\varphi$ is quasiconformal. 
\eprop
\begin{proof}
We may argue as in the proof of Proposition \ref{pr:qc-necessity} as long as we can verify capacity estimates analogous to Lemmas \ref{le:capacity-lower} (with, say, some fixed $\lambda$) and \ref{le:capacity-upper}. By \cite[Theorem 2.1 (ii)]{J}, we have
\[
  \dot{B}^{s_0}_{n/s_0,n/s_0}(\real^n) = \dot{F}^{s_0}_{n/s_0,n/s_0}(\real^n) \subset \dot{F}^{s}_{n/s,q}(\real^n) \subset \dot{F}^{s_1}_{n/s_1,n/s_1}(\real^n)  = \dot{B}^{s_1}_{n/s_1,n/s_1}(\real^n),
\]
where $0 < s_1 < s < s_0 < 1$ and the embeddings are continuous, so we get the desired capacity estimates by combining these embeddings with Lemmas \ref{le:capacity-lower} and \ref{le:capacity-upper}.
\end{proof}

Combining this with \cite[Theorem 1.1]{KYZ2} we thus have the following result.

\begin{corollary}\label{tl-boundedness-condition}\sl
Suppose that $0 < s < 1$ and $n/(n+s) < q \leq \infty$. Then the mapping induced by a homeomorphism $\varphi$ of $\real^n$ is bounded on $\dot{F}^s_{n/s,q}(\real^n)$ if and only if $\varphi$ is quasiconformal.
\end{corollary}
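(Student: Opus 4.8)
The plan is to simply assemble the two halves that are already in place. For the ``only if'' direction, note that if the operator $f \mapsto f\circ\varphi$ is bounded on $\dot F^s_{n/s,q}(\real^n)$, then in particular it is bounded there for the given $s \in (0,1)$ and $q \in (n/(n+s),\infty]$, which is a subrange of the hypotheses of Proposition \ref{pr:tl-qc-necessity}; hence that proposition applies verbatim and yields that $\varphi$ is quasiconformal. No additional work is needed here beyond invoking Proposition \ref{pr:tl-qc-necessity}.

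For the ``if'' direction, one quotes \cite[Theorem 1.1]{KYZ2}: for $n\geq 2$, $0<s<1$ and $q > n/(n+s)$, the space $\dot F^s_{n/s,q}(\real^n)$ is invariant under quasiconformal homeomorphisms, i.e.\ $f\mapsto f\circ\varphi$ is bounded on it whenever $\varphi$ is quasiconformal. The parameter range in the corollary, $n/(n+s) < q \leq \infty$, is exactly the range for which this sufficiency statement is available, so the two directions fit together with no gap.

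There is essentially no obstacle: the only point requiring a moment's care is bookkeeping of the index ranges — checking that Proposition \ref{pr:tl-qc-necessity} (which is stated for all $0<q\leq\infty$) covers the corollary's range, and that \cite[Theorem 1.1]{KYZ2} covers precisely the complementary direction on $n/(n+s)<q\leq\infty$. Once this is observed, the proof is a one-line combination, and I would present it as such: ``The sufficiency of quasiconformality is \cite[Theorem 1.1]{KYZ2}, and the necessity is Proposition \ref{pr:tl-qc-necessity}.''
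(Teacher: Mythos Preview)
Your proposal is correct and matches the paper's own argument exactly: the corollary is stated immediately after the sentence ``Combining this with \cite[Theorem 1.1]{KYZ2} we thus have following result,'' with no separate proof given. The necessity direction is Proposition~\ref{pr:tl-qc-necessity} and the sufficiency direction is \cite[Theorem 1.1]{KYZ2}, precisely as you wrote.
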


Next, we consider Triebel-Lizorkin-type function spaces on metric measure spaces $X$ and $Y$ as in the discussion preceding Proposition \ref{pr:qc-necessity-m}. For $s \in (0,1)$ and $1 \leq q \leq \infty$, we denote by $\dot{F}^s_{Q/s,q}(X)$ the space $\dot{M}^s_{Q/s,q}(X)$ defined in \cite[Definition 1.2]{KYZ2}, and similarly for $Y$ instead of $X$. Then it follows from \cite[Theorem 1.4]{KYZ2} that the quasisymmetry of $\varphi$ is a sufficient condition for the mapping induced by a homeomorphism $\varphi : X \to Y$ to be bounded from $\dot{F}^s_{Q/s,q}(Y)$ to $\dot{F}^s_{Q/s,q}(X)$ (\cite[Theorem 1.4]{KYZ2} actually holds for a larger parameter range). We have the following partial converse.

\prop{pr:tl-qc-necessity-m}
Suppose that the mapping induced by a homeomorphism $\varphi:X\to Y$ is bounded from $\dot{F}^s_{Q/s,q}(Y)$ to $\dot{F}^s_{Q/s,q}(X)$ for some $0 < s < 1$ and $1 < q < \infty$. Then $\varphi$ is quasiconformal. 
\eprop

\begin{proof}
We may argue as in the proof of Proposition \ref{pr:qc-necessity-m}; the necessary capacity estimates follow by combining Lemmas \ref{le:capacity-lower-m} and \ref{le:capacity-upper-m} with the continuous embeddings
\[
  \dot{B}^s_{Q/s,\min(Q/s,q)}(X) \subset \dot{F}^s_{Q/s,q}(X) \subset \dot{B}^s_{Q/s,\max(Q/s,q)}(X),
\]
which one may obtain using the Minkowski integral inequality (see also e.g.~\cite[Proposition 2.3.2/2]{T}), and similar embeddings with $Y$ in place of $X$.
\end{proof}

We thus have the following result.

\begin{corollary}\label{tl-boundedness-condition-m}\sl
Let $\varphi:X\to Y$ be a homeomorphism that maps bounded sets onto bounded sets. If $0 < s < 1$ and $1 < q < \infty$, the mapping induced by $\varphi$ is bounded from $\dot{F}^s_{Q/s,q}(Y)$ to $\dot{F}^s_{Q/s,q}(X)$ if and only if $\varphi$ is quasiconformal.
\end{corollary}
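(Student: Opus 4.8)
The plan is to obtain the corollary simply by assembling the necessity result just proved with the sufficiency of quasiconformality recalled earlier; no new constructions or estimates are required.

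For the ``only if'' implication there is essentially nothing to do: if $f \mapsto f\circ\varphi$ is bounded from $\dot{F}^s_{Q/s,q}(Y)$ to $\dot{F}^s_{Q/s,q}(X)$ with $0<s<1$ and $1<q<\infty$, then Proposition \ref{pr:tl-qc-necessity-m} applies verbatim and yields that $\varphi$ is quasiconformal.

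For the ``if'' implication, suppose $\varphi$ is quasiconformal. First I would pass from quasiconformality to quasisymmetry: under the standing hypotheses on $X$ (complete, $Q$-Ahlfors regular, locally compact Loewner space supporting a $(1,Q)$-Poincar\'e inequality) and on $Y$ (pathwise connected, $Q$-Ahlfors regular, locally compact, LLC), together with the assumption that $\varphi$ carries bounded sets onto bounded sets, \cite[Corollary 4.10]{HK} (already invoked in the discussion preceding Theorem \ref{th:bilip-necessity-m}) guarantees that $\varphi$ is quasisymmetric. Then I would invoke the sufficiency statement recalled just before Proposition \ref{pr:tl-qc-necessity-m}: by \cite[Corollary 5.1]{KYZ2}, quasisymmetry of $\varphi$ implies that $f\mapsto f\circ\varphi$ is bounded from $\dot{F}^s_{Q/s,q}(Y)$ to $\dot{F}^s_{Q/s,q}(X)$ for the relevant range of $s$ and $q$ (in fact for a wider range). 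Chaining these two facts gives the sufficiency, and combining the two implications completes the proof.

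The only point requiring care is purely bookkeeping: checking that the range $0<s<1$, $1<q<\infty$ of the corollary lies simultaneously inside the range where Proposition \ref{pr:tl-qc-necessity-m} applies and inside the (wider) range covered by \cite[Corollary 5.1]{KYZ2}, and that the hypothesis ``$\varphi$ maps bounded sets onto bounded sets'' is precisely the ingredient needed for the quasiconformal-to-quasisymmetric step via \cite[Corollary 4.10]{HK}. Everything else has already been set up in Sections \ref{section:real} and \ref{section:metric} and in the cited literature.
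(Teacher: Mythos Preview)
Your proposal is correct and follows essentially the same route as the paper: the paper's proof simply notes that under the bounded-sets-to-bounded-sets hypothesis quasiconformality and quasisymmetry coincide (via \cite[Corollary 4.10]{HK}), and then combines \cite[Corollary 5.1]{KYZ2} for sufficiency with Proposition \ref{pr:tl-qc-necessity-m} for necessity. Your write-up is more explicit but logically identical.
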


\begin{proof}
By the assumption that $\varphi$ maps bounded sets onto bounded sets, the quasiconformality of $\varphi$ is equivalent with the quasisymmetry of $\varphi$ (see the discussion preceding Theorem \ref{th:bilip-necessity-m}). Thus the statement follows by combining \cite[Theorem 1.4]{KYZ2} and Proposition \ref{pr:tl-qc-necessity-m}.
\end{proof}

\end{document}